\documentclass[12pt]{article}
\usepackage{amsmath, amsthm, amssymb, amsfonts}
\usepackage{psfrag, epsfig}
\newtheorem{thm}{Theorem}[section]

\newtheorem{lem}[thm]{Lemma}
\newtheorem{conj}[thm]{Conjecture}

\theoremstyle{remark}

\theoremstyle{definition}

\newtheorem{defn}{Definition}[section]

\newcommand{\A}{\mathcal{A}}
\newcommand{\piinv}{\pi^{-1}}
\newcommand{\Pres}{\mathcal{P}}
\newcommand{\M}{\mathcal{M}}

\newcommand{\mubar}{\bar{\mu}}
\newcommand{\XS}{X\times S}
\newcommand{\Part}{\mathcal{P}}
\newcommand{\R}{\mathcal{R}}
\newcommand{\Qtilde}{\tilde{\mathcal{Q}}}
\newcommand{\Q}{\mathcal{Q}}
\newcommand{\F}{\mathcal{F}}
\newcommand{\Calg}{\mathcal{C}}
\newcommand{\Balg}{\mathcal{B}}
\newcommand{\dbar}{\bar{d}}
\newcommand{\muhat}{\hat{\mu}}
\newcommand{\xbar}{\bar{x}}

\newcommand{\Z}{\mathbb{Z}}
\newcommand{\U}{\mathcal{U}}

\title{Compensation functions for factors of shifts of finite type}
\author{John Antonioli}
\date{}
\begin{document}
\maketitle

\begin{abstract}
Let $\pi:X \to Y$ be an infinite-to-one factor map, where $X$ is a shift of finite type.  A compensation function relates equilibrium states in $X$ to equilibrium states on $Y$.  The $p$-Dini condition is given as a way of measuring the smoothness of a continuous function, with $1$-Dini corresponding to functions with summable variation.  Two types of compensation functions are defined in terms of this condition.  Given a fully-supported invariant measure $\nu$ on $Y$, we show that the relative equilibrium states of a $1$-Dini function $f$ over $\nu$ are themselves fully supported, and have positive relative entropy.  We then show that there exists a compensation function which is $p$-Dini for all $p > 1$ which has relative equilibrium states supported on a finite-to-one subfactor.
\end{abstract}

\section{Introduction}

Nearly everywhere symbolic dynamical systems arise, questions about factor maps between subshifts have been studied.  In studying the thermodynamic formalism of these factor systems one would generally like to understand the structure of equilibrium states which are maximal relative to some invariant measure in the factor.  Compensation functions arise in this setting as a useful tool for categorizing these relative equilibrium states.  

For a factor triple $ F = ( X, Y, \pi ) $, a compensation function is a function $f \in C(X) $ which satisfies $\Pres_{X}( f + \phi \circ \pi ) = \Pres_{Y}( \phi )$ for all $\phi \in C(Y)$, where $\Pres$ is the topological pressure.  A compensation function has been described as an ``oracle for how entropy can appear in a fiber'' \cite{BP}.  The meaning of this statement is made clearer by the following fact (first observed in \cite{BT}).  If $\nu \in \M(Y)$ then
\[
\sup_{\mu \circ \piinv = \nu} \left\{ h(\mu) - h(\nu) + \int f d\mu \right\} = 0
\]
When $\pi$ is infinite-to-one we know that $X$ is inherently capable of generating more topological entropy than $Y$, so we might expect $h(\mu) - h(\nu)$ to be positive for measures which maximize this expression.  In this way, it seems the function $f$ is ``compensating'' for the relative entropy over $\nu$.  In fact for a shift of finite type, if we define 
\[
g(y) = \limsup_{n \to \infty} \frac{1}{n} \log \vert \piinv(y_0 ... y_{n-1}) \vert
\]
this function agrees with the usual relative topological entropy a.e. with respect to every invariant measure, and $-g \circ \pi$ serves as a measurable, though likely not continuous, compensation function \cite{YY}.

One general strategy for employing compensation functions is to prove that one with some structure exists for a factor triple, and that this structure enforces certain conditions on the relative equilibrium states which arise.  Saturated compensation functions are those which can be written as $g \circ \pi$ such as the relative topological entropy above.  They have been studied in \cite{SS} and \cite{YY} for their relation to weighed entropy and Hausdorff measure.  In \cite{Wcompfn} it was shown that factors with a compensation function in the Walters class of continuous functions have nice lifting properties for equilibrium states.

In the original survey of compensation functions by Walters \cite{Wcompfn}, it is asserted that every subshift factor triple possesses at least a continuous compensation function.  However, the type of function proved to exist is very different from the ``oracle'' above.  Walters' compensation functions achieve their task by forcing relative equilibrium states to live on a finite-to-one subfactor.  No relative entropy can be generated on this subfactor, so the pressures are equal in a more trivial way.  In general no such subfactor need exist for a subshift factor triple, which has caused this statement to be disputed.  In the case where $X$ is at least sofic, the existence of a finite-to-one subfactor is guaranteed by \cite{MPW}.  These competing notions of compensation lead us to define two types of compensation function.

\begin{defn} \label{def:oraclecomp} Let $(X, Y, \pi)$ be a subshift factor triple.  Let $f$ be a compensation function for this factor triple.  Then $f$ is an \emph{oracle-type} compensation function if for every fully supported $\nu \in \M(Y)$, the relative equilibrium states of $f$ over $\nu$ are fully supported and have positive relative entropy.
\end{defn}

\begin{defn} \label{def:walterscomp} Let $(X, Y, \pi)$ be a subshift factor triple.  Let $f$ be a compensation function for this factor triple. Then $f$ is a \emph{Walters-type} compensation function if there exists a subshift $X_{0} \subset X$ with $\pi \vert_{X_{0}}$  surjective and finite-to-one such that the relative equilibrium states of $f$ are supported on $X_{0}$.  In this case the relative equilibrium states of $f$ will have zero relative entropy.
\end{defn}

A Walters-type compensation function must be sharply negative off the subfactor to scare equilibrium states away from trying to generate relative entropy.  A natural question to ask is how sharp they must be.  Many known oracle-type compensation functions have summable variation.  This leads one to conjecture that functions of summable variation are not sharp enough.

In this work an attempt to encompass weaker continuity conditions leads to inspecting functions for which $( \text{var}_{n}(f) )^p$ is summable, the so called $p$-Dini functions.  For an infinite-to-one factor map of an irreducible shift of finite type, our primary results will show that a $1$-Dini compensation function must be of oracle-type, whereas we can explicitly construct a Walters-type compensation function which is simultaneously $p$-Dini for every $p>1$.  This fact exposes a phase transition in the relative equilibrium states of compensation functions.  These statements are made precise in Theorems \ref{thm:1dini} and \ref{thm:pdini}.

\subsection*{\small {\em Acknowledgement}}
This paper is based on the PhD Thesis of the author.  The author gratefully acknowledges the support and encouragement of his adviser Anthony Quas, as well as Karl Petersen and Chris Bose for their helpful comments and discussions.

\section{Preliminaries}

We will always be working with a factor triple $F = ( X, Y, \pi )$ where $X$ is an irreducible shift of finite type and $\pi$ is a $1$-block code to a sofic shift $Y$.  In the proofs, we will also make the assumption that $X$ is a $1$-step SFT by a standard higher-block re-encoding argument.  The shift dynamics on $X$ and $Y$ will be called $T$ and $S$ respectively.

For a word $w$ of length $n$ in $X$ (or some other shift space) we will let $[w]_{i} = \{x \in X \mid x_{i} \dots x_{i+n-1} = w \}$.  This is just the cylinder set of points having $w$ starting at coordinate $i$.  When the subscript is dropped, we assume $[w] = [w]_{0}$.  Sometimes we will use $[w]_{i}^{j}$ to make explicit both the start and end points of our cylinder set.

Several constructions rely on the existence of a subshift $X_0 \subset X$ such that $\pi \vert_{X_0}$ is finite-to-one.  When $X$ is an irreducible shift of finite type $X_{0}$ is given by a construction which appears in \cite{MPW}.  One first puts an order on the symbols in the alphabet of $X$.  This imparts a lexicographic order on words in $X$ which have the same endpoints and image under $\pi$.  Sets of words which share this property are said to form \emph{a diamond over} $Y$.  Let $\U$ be the collection of all words which are lexicographically minimal in the ordering on diamonds.  A point $x$ is in $X_{0}$ if every subword of $x$ is in $\U$.  This condition prevents two words which form a diamond from appearing in $X_{0}$, which ensures the code is finite-to-one.  We will sometimes refer to such an $X_{0}$ as a \emph{MPW-subshift} of $X$ and words in $\U$ as \emph{MPW-minimal}.

There are several useful notions of relative pressure which stem from their introduction in \cite{LW} and further development in \cite{Wcompfn}.  One which has particular relevance for compensation functions is the {\em maximal relative pressure}.  It is defined for $f \in C(X)$ as $W(f) = \sup_{\mu \in \M(X)}{ \left\{ h(\mu)-h(\mu \circ \piinv)+\int{f \, d\mu} \right\} }$.  It can be shown that this is equivalent to $\sup \left\{ \Pres( f + \phi\circ\pi) - \Pres(\phi) \mid \phi \in C(Y) \right\}$, and so it is clear that if $f$ is a compensation function then $W(f) = 0$.  Equilibrium states in the factor setting will generally be defined relative to some measure in $Y$.  We will say $\mu \in \M(X)$ is a {\em relative equilibrium state of $f$ over $\nu \in \M(Y)$} if it obtains the supremum in $\sup_{\mu} \left\{ h(\mu) - h(\nu) + \int f \, d\mu \mid \mu \circ \piinv = \nu \right\}$.  We will also use the notation $h(\mu) - h(\mu \circ \piinv) = h(\mu \mid \mu \circ \piinv)$ for the relative entropy of a measure.  Good references for the general theory of pressure and equilibrium states are \cite{Wbook} and \cite{K}.

In our setting it makes sense to define $\text{var}_{n}(f)$ to be the maximum distance two points which agree on a $2n$-window about the origin can be mapped apart.  In other words, $\text{var}_{n}(f) = \max_{d(x,y) \leq 2^{-n}} \left\vert f(x) - f(y) \right\vert$.  We will say a function $f$ is \textit{ $p$-Dini} if $\sum_{n} \left( \text{var}_{n}(f) \right)^p < \infty$.  Note that the $1$-Dini functions are exactly those with summable variation.

We note that if we take the trivial factor, Theorem \ref{thm:1dini} gives the well known non-relative statement that 1-Dini functions have fully supported equilibrium states \cite{Wgmeas}.  The nonrelative case also provides inspiration for the use of the $p$-Dini condition. In the nonrelative case, an example of a function with nonunique equilibrium states given by Hofbauer \cite{H} is similar to the one constructed for the second proof in the simple case of $X$ being the full $2$-shift and taking a trivial factor.  This function is not $1$-Dini, but it is $p$-Dini for $p > 1$.  Additionally, in \cite{GS} a similar phase transition is seen as the modulus of continuity of a piecewise expanding map of the interval is relaxed from one which is summable to one which is summable when raised to a power $p$ which is strictly greater than $1$.

Several lemmas from \cite{Yoo} will also be needed in the proof of the first theorem.  The first lemma has to do with the ability to construct diamonds which live half-in and half-out of a proper subshift.
\begin{lem} \label{yoopaths}
Let $\pi:X \to Y$ be an infinite-to-one, 1-block code from an irreducible shift of finite type $X$.  If $Z \subset X$ is a proper subshift such that $\pi(Z)=Y$, then there exist two words $u$ and $v$ in $X$ with the following properties:
\begin{enumerate}
\item $\pi(u)=\pi(v)$
\item $\vert u \vert = \vert v \vert = n$, $u_0=v_0$ and $u_{n-1}=v_{n-1}$
\item $u$ is a word appearing in $Z$, $v$ does not occur in $Z$
\item For any words $s$ and $t$ with $sut$ a word in $Z$, there is exactly one occurrence of $v$ in $svt$.
\end{enumerate}
\end{lem}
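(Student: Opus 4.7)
The plan is to build $(u,v)$ by first isolating a short ``raw'' diamond witnessing $Z \neq X$, then padding it to obtain matching endpoints via irreducibility, and finally framing it with marker blocks so that the uniqueness condition (4) follows from overlap analysis.

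Since $Z$ is a proper subshift of $X$, pick a word $\beta$ that appears in $X$ but not in $Z$. As $\pi(Z) = Y$, the word $\pi(\beta)$ appears in $\pi(Z)$, so some word $\alpha$ appearing in $Z$ satisfies $\pi(\alpha) = \pi(\beta)$; necessarily $\alpha \neq \beta$. The first and last letters of $\alpha, \beta$ need not agree, but they have equal $\pi$-image since $\pi$ is a $1$-block code. Using irreducibility of $X$ together with the infinite-to-one hypothesis, one produces ``prefix'' and ``suffix'' sub-diamonds: equal-length, equal-$\pi$-image paths $p, p'$ in $X$ starting at a common symbol $a$ and ending respectively at $\alpha_0, \beta_0$, and analogously paths $q, q'$ on the right ending at a common symbol. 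Setting $\tilde u := p \alpha q$ and $\tilde v := p' \beta q'$ gives equal-length words with equal $\pi$-image and common endpoints, with $\tilde u$ chosen to lie in $Z$ and $\tilde v$ outside $Z$ (since it still contains the $Z$-forbidden block $\beta$).

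The decisive step is (4). Pad $\tilde u, \tilde v$ on each side by a common long ``marker'' word $M \in Z$, setting $u := M \tilde u M$ and $v := M \tilde v M$. Conditions (1)--(3) are preserved. For (4), suppose $sut \in Z$ and $v$ occurs in $svt$ at some position $j$. The words $sut$ and $svt$ agree outside the interior window where $\alpha$ is swapped for $\beta$; in particular they share all of the $M$-padding. An occurrence of $v$ in $svt$ disjoint from this interior window would already appear in $sut \in Z$, contradicting $v \notin Z$. Hence the only possibilities are the planted occurrence at $j = |s|$ and occurrences at some shift $d$ with $0 < |d| < |v|$ whose window straddles the interior. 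A direct position-count shows such a shift either (a) forces a short period of $v$, which is impossible if $M$ is chosen border-free so that $v$ has no period less than $|M|$, or (b) forces $t$ (respectively $s$) to begin (end) with a long enough suffix (prefix) of $v$ to include the $Z$-forbidden block $\beta$, making $sut \in Z$ impossible. Choosing $M$ long and border-free within $Z$ (which exists since $Z$ has positive topological entropy, inheriting from $Y$) closes all cases, and only the planted $v$ survives.

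The main obstacle is the third stage: organizing the case analysis so every possible overlap shift $d$ falls into case (a) or (b), and simultaneously ensuring that $u = M \tilde u M$ is a legitimate word of $Z$ and that the ``sub-diamond'' paths from Stage 2 exist (which relies on the standard fact that infinite-to-one $1$-block codes on irreducible SFTs admit diamonds connecting any compatible pair of end-symbols). The earlier stages are essentially structural bookkeeping with irreducibility, but the uniqueness argument requires careful accounting of how $v$ can embed in $svt$ around the modified window.
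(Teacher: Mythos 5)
The paper itself does not prove this lemma; it is imported verbatim from Yoo's paper, so there is no in-paper argument to measure your proposal against. Evaluated on its own merits, the proposal contains genuine gaps, the most serious of which come at its two hinge points.

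First, the ``sub-diamond'' step is not a standard fact and is in fact false as stated. From an infinite-to-one $1$-block code on an irreducible SFT you are guaranteed the existence of \emph{some} diamond, but you are not guaranteed that a given $\pi$-compatible pair of symbols $(\alpha_0,\beta_0)$ lies in a component of the fiber product $X\ast_Y X$ reachable from the diagonal. For a concrete obstruction, take $X$ to be an irreducible $1$-step SFT of period two (symbols split into two parity classes) factoring onto a fixed point $Y$; then $(c_1,c_2)$ is reachable from the diagonal only when $c_1$ and $c_2$ share a parity class, so for many $\pi$-compatible pairs no equal-length, equal-image prefix diamond exists. Without this step, $\tilde u$ and $\tilde v$ need not have matching endpoint symbols and the rest of the construction does not get off the ground.

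Second, even granting that step, nothing in your construction ensures that $\tilde u = p\alpha q$, and hence $u = M\tilde u M$, is a word of $Z$. You chose $\alpha$ in $B(Z)$, but $p$ and $q$ are arbitrary $X$-words and $M$ is merely a word of $Z$; concatenating these may well leave $Z$, since $Z$ is only assumed to be a subshift, not closed under such splicing. Property (3) requires $u \in B(Z)$, so this is not a loose end but a fatal gap: the whole point of the lemma is to produce a diamond whose ``low'' side is genuinely a $Z$-word, and your construction does not deliver that.

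There are secondary issues as well. The appeal to ``$Z$ has positive topological entropy, inheriting from $Y$'' is unwarranted, since $h(Y) > 0$ is not a hypothesis: $Y$ (and then possibly $Z$) can be a single periodic orbit, in which case there are no long border-free words in $Z$ at all. The concatenations $M\tilde u M$ and $M\tilde v M$ are not automatically legal in a $1$-step SFT without routing words, and inserting them changes the bookkeeping. And the overlap analysis for property (4) is sketched rather than carried out: border-freeness of $M$ handles small shifts by periodicity, but for large shifts one needs $M$ to behave as a genuine synchronizing marker and that structure is not set up. Fixing the proof requires a different mechanism for producing a diamond that is already half-inside $Z$ (rather than padding a $Z$-word from the outside), which is precisely where the real content of Yoo's lemma lies.
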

The second two lemmas actually apply to any invariant measure $\mu$ on a $\sigma$-algebra $\F$.  For a set $A \in \F$ (or a sub-$\sigma$-algebra of $\F$) with $\mu(A) > 0$ we let $\F_{A} = \{ B \cap A \mid B \in \F \}$, and use similar notation for the restriction of sub-$\sigma$-algebras and partitions to $A$.  Similarly, we let $\mu_{A}(B) = \frac{\mu(B)}{\mu(A)}$ and for a partition $\Part$ and sub-$\sigma$-algebra $\Balg$ we let $H_{A}(\Part \mid \Calg) = H_{\mu_{A}}(\Part_{A} \mid \Balg_{A})$.
\begin{lem}\label{yooindlemma}
Let $\Part$ be a measurable partition of $X$ and let $\Balg$ and $\Calg$ be sub-$\sigma$-algebras of $\F$ such that $\Calg$ and $\Part \vee \Balg$ are independent.  If $C \in \Calg$ and $\mu(C)>0$, then
\[
H_{C}(\Part \mid \Balg\vee \Calg) = H(\Part \mid \Balg)
\]
\end{lem}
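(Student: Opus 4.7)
The plan is to reduce the identity to two standard consequences of the independence hypothesis. First, because $\Calg$ is independent of $\sigma(\Part)\vee\Balg$, conditional expectations onto the larger $\sigma$-algebra $\Balg\vee\Calg$ collapse a.s.\ onto $\Balg$. Second, expectations of $\Balg$-measurable functions against $\mathbf{1}_{C}$ factor, because $\Calg$ is independent of $\Balg$. Together these let me rewrite $H_{C}(\Part \mid \Balg\vee\Calg)$ so that the integrand becomes the one defining $H(\Part\mid\Balg)$, with the factor $\mu(C)$ produced by independence cancelling the normalization built into $\mu_C$.

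Concretely, I would first verify via the defining integral of conditional expectation that, since $C \in \Calg \subset \Balg\vee\Calg$, the restricted conditional expectation $\mathbb{E}_{\mu_C}[\mathbf{1}_P \mid (\Balg\vee\Calg)_C]$ is just the restriction to $C$ of $\mathbb{E}_\mu[\mathbf{1}_P \mid \Balg\vee\Calg]$. Next, for any $P \in \Part$, $B \in \Balg$, and $C' \in \Calg$, the independence assumption gives
\[
\mu(P \cap B \cap C') \;=\; \mu(P \cap B)\,\mu(C'),
\]
and a standard Dynkin-class argument then yields $\mathbb{E}_\mu[\mathbf{1}_P \mid \Balg \vee \Calg] = \mathbb{E}_\mu[\mathbf{1}_P \mid \Balg]$ $\mu$-a.s. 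Writing $g_P := \mathbb{E}_\mu[\mathbf{1}_P \mid \Balg]\log \mathbb{E}_\mu[\mathbf{1}_P \mid \Balg]$, the function $g_P$ is $\Balg$-measurable, hence independent of $\mathbf{1}_C$, so
\[
\int_{C} g_{P}\,d\mu \;=\; \mu(C)\int g_{P}\,d\mu.
\]
Summing over $P \in \Part$ and dividing by $\mu(C)$, which enters through the definition of $\mu_C$ in $H_C$, produces exactly $H(\Part \mid \Balg)$.

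The main obstacle, to the extent there is one, is purely bookkeeping with the restricted objects $\Part_C$, $\Balg_C$, $(\Balg\vee\Calg)_C$, and $\mu_C$; the substance of the argument is just two applications of the independence hypothesis. Because $C$ itself belongs to the conditioning $\sigma$-algebra, the Radon--Nikodym check identifying the restricted and unrestricted conditional expectations on $C$ is essentially immediate, and no subtler measure-theoretic issues arise beyond observing that $x \mapsto x\log x$ preserves $\Balg$-measurability.
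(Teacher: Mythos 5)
Your argument is correct, and it is the natural one. The two facts you isolate, namely (i) that independence of $\Calg$ from $\sigma(\Part)\vee\Balg$ makes $\mathbb{E}_\mu[\mathbf{1}_P\mid\Balg\vee\Calg]=\mathbb{E}_\mu[\mathbf{1}_P\mid\Balg]$ almost surely (checked on rectangles $B\cap C'$ and extended by a $\pi$-$\lambda$ argument), and (ii) that $C\in\Calg\subset\Balg\vee\Calg$, so conditioning under $\mu_C$ with respect to $(\Balg\vee\Calg)_C$ is just the restriction to $C$ of the unrestricted conditional expectation, are exactly what is needed; and the final cancellation comes from independence of the $\Balg$-measurable $g_P=\mathbb{E}_\mu[\mathbf{1}_P\mid\Balg]\log\mathbb{E}_\mu[\mathbf{1}_P\mid\Balg]$ from $\mathbf{1}_C$, which turns $\int_C g_P\,d\mu$ into $\mu(C)\int g_P\,d\mu$ and cancels the $1/\mu(C)$ in $\mu_C$. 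For the record, this paper does not prove the lemma: it is quoted verbatim as one of the preparatory lemmas imported from the cited work of Yoo, so there is no in-paper proof to compare against; your proof fills that gap with the standard argument and would serve as a correct self-contained justification.
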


\begin{lem}\label{yooreslemma}
Let $A \in \F$ and $\mu(A)>0$.  Then for a measurable partition $\Part$,
\[
H(\Part \mid \Balg) \geq \mu(A)H_{A}(\Part \mid \Balg)
\]
\end{lem}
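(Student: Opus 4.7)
The plan is to introduce the $\sigma$-algebra $\sigma(A) := \{\emptyset, A, A^c, X\}$ as additional conditioning information and combine two classical facts: the decomposition of conditional entropy over a finite conditioning partition, and the monotonicity of conditional entropy under refinement of the conditioning $\sigma$-algebra.

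First I would apply the standard decomposition formula to the two-element partition $\{A, A^c\}$ to obtain
\[
H(\Part \mid \Balg \vee \sigma(A)) = \mu(A)\, H_{A}(\Part \mid \Balg) + \mu(A^c)\, H_{A^c}(\Part \mid \Balg).
\]
(The edge case $\mu(A) = 1$ is trivial, so I may assume $\mu(A^c) > 0$.) Since the second summand is nonnegative, dropping it yields $H(\Part \mid \Balg \vee \sigma(A)) \geq \mu(A)\, H_{A}(\Part \mid \Balg)$. Combining this with the monotonicity inequality $H(\Part \mid \Balg) \geq H(\Part \mid \Balg \vee \sigma(A))$ gives the lemma in one line.

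The only real obstacle is verifying that decomposition formula in the precise form stated, since the paper's notation $H_A(\Part \mid \Balg) = H_{\mu_A}(\Part_A \mid \Balg_A)$ involves restricted $\sigma$-algebras and the renormalized measure $\mu_A$. This reduces to the almost-everywhere identification, for $x \in A$, of the conditional probability $\mu(P \mid \Balg \vee \sigma(A))(x)$ with $\mu_A(P \cap A \mid \Balg_A)(x)$ (and similarly on $A^c$), which one checks by writing out a $\Balg \vee \sigma(A)$-measurable version of each conditional expectation explicitly on the two pieces $A$ and $A^c$ and splitting the integral defining $H(\Part \mid \Balg \vee \sigma(A))$ accordingly. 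Once that bookkeeping is done, no further calculation is needed.
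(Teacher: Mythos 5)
Your argument is correct. The paper imports this lemma from the cited work of Yoo without reproducing a proof, so there is no in-text argument to compare against; but the route you take is the standard one and each step checks out. The decomposition
\[
H(\Part \mid \Balg \vee \sigma(A)) = \mu(A)\, H_{A}(\Part \mid \Balg) + \mu(A^{c})\, H_{A^{c}}(\Part \mid \Balg)
\]
follows exactly as you say, once one verifies that a version of $E_\mu(\mathbf{1}_P \mid \Balg\vee\sigma(A))$ restricted to $A$ is a version of $E_{\mu_A}(\mathbf{1}_{P\cap A}\mid \Balg_A)$ (which is immediate since $(\Balg\vee\sigma(A))$-measurable sets inside $A$ are precisely the sets of $\Balg_A$, and the defining integral identity transfers after dividing by $\mu(A)$). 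Dropping the nonnegative $A^c$-term and then invoking monotonicity of conditional entropy in the conditioning $\sigma$-algebra, $H(\Part\mid\Balg)\ge H(\Part\mid\Balg\vee\sigma(A))$, gives the claim. The edge case $\mu(A)=1$ is handled. Since $\Part$ in the paper's applications is a finite state partition, no delicacy about uncountable measurable partitions arises, though the same argument works in Rokhlin's framework by taking suprema over countable subpartitions.
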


A type of interleaving argument similar to the ones used in \cite{PQS} and \cite{Yoo} is employed to prove Theorem \ref{thm:1dini}.  Some of the estimates used will rely on the continuity of entropy with respect to the $\dbar$ distance.  This result can be found in \cite[Theorem 7.9]{Rud}.
\begin{thm} \label{dbarent}
If $\dbar(\mu, \nu) < \epsilon$ then
\[
\vert h(\mu) - h(\nu) \vert = O(\epsilon \log \epsilon)
\]
\end{thm}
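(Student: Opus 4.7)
The strategy is the standard $\dbar$-continuity argument: pass to an optimal shift-invariant joining of $\mu$ and $\nu$, express the joint entropy via the chain rule, and control the single-step conditional entropy by the Fano-type fact that a rare disagreement event contributes little to entropy.

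First, I would fix a shift-invariant joining $\lambda$ of $\mu$ and $\nu$ on $X \times X$ satisfying $\lambda(\{(x,y) : x_0 \neq y_0\}) < \epsilon$; such a $\lambda$ exists by compactness of the space of joinings together with the definition of $\dbar$. Let $\alpha$ and $\beta$ denote the time-zero coordinate partitions coming from the first and second factors of $X \times X$. Since each is a generator for its marginal system, $h_\lambda(\alpha) = h(\mu)$ and $h_\lambda(\beta) = h(\nu)$.

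Next, I would estimate the single-step conditional entropy $H_\lambda(\beta \mid \alpha)$ by splitting on agreement: on $\{x_0 = y_0\}$ (measure $> 1-\epsilon$) the contribution vanishes, and on $\{x_0 \neq y_0\}$ (measure $< \epsilon$) each atom of $\alpha$ meets at most $|\mathcal{A}| - 1$ atoms of $\beta$, where $\mathcal{A}$ is the alphabet. Fano's inequality then gives
\[
H_\lambda(\beta \mid \alpha) \le -\epsilon\log\epsilon - (1-\epsilon)\log(1-\epsilon) + \epsilon \log(|\mathcal{A}|-1) = O(\epsilon \log \epsilon)
\]
as $\epsilon \to 0^+$.

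Finally, the chain rule together with shift-invariance yields
\[
h_\lambda(\alpha \vee \beta) \le h_\lambda(\alpha) + H_\lambda(\beta \mid \alpha) \le h(\mu) + O(\epsilon \log \epsilon),
\]
and since $h_\lambda(\alpha \vee \beta) \ge h_\lambda(\beta) = h(\nu)$, we conclude $h(\nu) - h(\mu) = O(\epsilon \log \epsilon)$. Swapping the roles of $\mu$ and $\nu$ gives the reverse inequality. The main delicacy is ensuring that the $n$-step conditional entropy $H_\lambda\bigl(\bigvee_{i=0}^{n-1} T^{-i}\beta \mid \bigvee_{i=0}^{n-1} T^{-i}\alpha\bigr)$ grows at most linearly in $n$ with slope $H_\lambda(\beta \mid \alpha)$; subadditivity of conditional entropy together with stationarity handles this, but this is the only place where a careless estimate could degrade the clean $O(\epsilon \log \epsilon)$ dependence.
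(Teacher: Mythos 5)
The paper does not prove this theorem itself but cites it as \cite[Theorem 7.9]{Rud}; your argument --- pass to a near-optimal shift-invariant joining $\lambda$, bound $H_\lambda(\beta\mid\alpha)$ by a Fano-type estimate on the rare disagreement event, and convert the one-step bound to a process-entropy bound via the chain rule plus subadditivity of conditional entropy --- is correct and is precisely the standard proof of $\dbar$-continuity of entropy. The delicacy you flag at the end, that $H_\lambda\bigl(\bigvee_{i=0}^{n-1}T^{-i}\beta \mid \bigvee_{i=0}^{n-1}T^{-i}\alpha\bigr) \le n\,H_\lambda(\beta\mid\alpha)$, is indeed handled by subadditivity together with dropping extraneous conditioning and stationarity, so no loss is incurred there.
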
 

\section{Oracle-type compensation functions}

Our first theorem deals with the relationship between oracle-type compensation functions and the class of $1$-Dini functions.  Actually, the conclusions of this theorem apply to all $1$-Dini functions, and therefore extend a previously known result for nonrelative equilibrium states.

\begin{thm} \label{thm:1dini}
Let $\pi:X \to Y$ be an infinite-to-one, 1-block code from an irreducible shift of finite type $X$ to sofic shift $Y$.  Let $f \in C(X)$ be a $1$-Dini function and $\nu \in \M(Y)$ be fully supported.  Suppose $\mu \in \M(X)$ is a relative equilibrium state of $f$ over $\nu$.  Then $\mu$ is fully supported and has positive relative entropy over $\nu$.  In particular, if $f$ is a compensation function then it is oracle-type.
\end{thm}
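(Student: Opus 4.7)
The plan is a perturbation (interleaving) argument by contradiction, in the spirit of \cite{Yoo} and \cite{PQS}, in which both conclusions flow from a single construction. Suppose first that $\mu$ is not fully supported, so that $Z := \mathrm{supp}(\mu)$ is a proper subshift of $X$; since $\mu \circ \piinv = \nu$ is fully supported, $\pi(Z) = Y$. Invoke Lemma \ref{yoopaths} to obtain words $u, v$ with $\pi(u) = \pi(v)$, matching first and last letters, $u$ appearing in $Z$ and $v$ not appearing in $Z$, along with the unique-occurrence clause controlling insertions of $v$.

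From $u$ and $v$ build a one-parameter family $\mu_p$ for small $p > 0$. Form the joining of $\mu$ with an independent Bernoulli$(p)$ mark sequence on $\{0,1\}^\Z$; given a sample $(x, \sigma)$, scan $x$ for occurrences of $u$ and replace each occurrence whose leading mark is $1$ by $v$, then push forward to obtain $\mu_p \in \M(X)$. The unique-occurrence clause of Lemma \ref{yoopaths} makes this substitution code shift-equivariant and invertible on its image, so $\mu_p$ is a well-defined shift-invariant measure, and $\pi(u) = \pi(v)$ forces $\mu_p \circ \piinv = \nu$. Hence $\mu_p$ competes with $\mu$ in the relative variational principle over $\nu$.

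Next compare $h(\mu_p) - h(\nu) + \int f \, d\mu_p$ with its value at $\mu$. Applying Lemmas \ref{yooindlemma} and \ref{yooreslemma} to a partition that resolves the marks at occurrences of $u$ yields $h(\mu_p) - h(\mu) \geq \alpha \, H(p, 1-p)$, where $\alpha := \mu([u]) > 0$ and $H$ denotes the binary entropy function. For the integral, the points $x$ and $\Phi(x,\sigma)$ agree on the window $[-d, d]$ whenever no flip has occurred in a bounded neighborhood of that window, so $|f(x) - f(\Phi(x,\sigma))| \leq \var_d(f)$ in that event. Averaging against the Bernoulli$(p)$ law of $\sigma$ and exploiting the summable tail $\sum_n \var_n(f) < \infty$ from the $1$-Dini hypothesis, one shows that the expected discrepancy, and hence $|\int f \, d\mu_p - \int f \, d\mu|$, is $o(p \log(1/p))$. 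Since the entropy gain $\alpha H(p, 1-p) \sim \alpha p \log(1/p)$ dominates, $\mu_p$ strictly beats $\mu$ in relative pressure for sufficiently small $p$, contradicting equilibrium.

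For the positive relative entropy claim, I would apply the same construction to any pair of diamond words $u, v$ in $X$, which exist because $\pi$ is infinite-to-one on an irreducible SFT. If $h(\mu \mid \nu) = 0$, the independent marker contributes strictly positive relative entropy, $h(\mu_p \mid \nu) \geq \alpha H(p, 1-p) > 0$, while the $\int f$ discrepancy remains negligible, producing the same contradiction. The main technical hurdle will be the comparison of the entropy gain against the integral loss: verifying that $1$-Dini summability is exactly what forces the integral discrepancy below $p \log(1/p)$, and that the unique-occurrence property of Lemma \ref{yoopaths} genuinely lets the Bernoulli marks contribute the full $\alpha H(p, 1-p)$ to the entropy of $\mu_p$. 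Theorem \ref{dbarent} provides a supporting $\dbar$-estimate $\dbar(\mu, \mu_p) = O(p)$ if the direct bookkeeping becomes awkward.
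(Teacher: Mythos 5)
Your proposal follows essentially the same strategy as the paper: contradiction via an interleaving construction built from a Bernoulli marker, using Lemma~\ref{yoopaths} for full support and a generic diamond for positive relative entropy, with the estimate $\Omega^{+}(p\log p)$ entropy gain versus $O(p)$ integral loss. However, there is one genuine gap and one imprecision worth flagging.

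The gap: you mark positions with an i.i.d.\ Bernoulli$(p)$ sequence and ``replace each occurrence \emph{whose leading mark is $1$} by $v$.'' This substitution is not well-defined, because $u$ may overlap itself: if $x$ contains occurrences of $u$ starting at positions $i$ and $j$ with $|i-j| < |u|$ and both carry a $1$, the two replacements conflict, and after performing one the second occurrence may no longer read $u$. There is also no shift-equivariant tiebreak such as ``leftmost first'' available on a bi-infinite sequence. The paper sidesteps this by not using a raw Bernoulli sequence at all: it pushes Bernoulli$(p)$ through a sliding-block code $\psi$ that emits a $1$ only when preceded by $n-1$ zeros, producing a marker process $\eta$ in which consecutive $1$'s are separated by at least $n = |u|$, so substitutions can never collide. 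You would either need to adopt this sparse-marker construction or specify and verify a measurable, shift-commuting conflict resolution; only then is $\mu_p$ a well-defined invariant measure. Note that the sparsification also forces you to re-derive the entropy gain: your claim $h(\mu_p) - h(\mu) \geq \mu([u])\,H(p,1-p)$ does not match the paper's estimate $h(\bar\mu) - h(\mu) \geq \frac{\mu([u])}{n} h(\eta)$, which carries a $1/n$ factor and replaces $H(p,1-p)$ by $h(\eta)$ (which is then compared to $H(p,1-p)$ via a $\dbar$-joining and Theorem~\ref{dbarent}, losing $O(p^2 \log p)$). Both quantities are still $\Omega^{+}(p\log p)$, so the conclusion survives, but the constant-$\alpha$ form you wrote is not what the construction actually yields. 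A secondary point for Part 2: with $\mu$ fully supported, both $u$ and $v$ already occur; the paper handles this by making $\phi$ swap $u\leftrightarrow v$ symmetrically so the coding is an involution at each marked site, whereas a one-directional $u\to v$ substitution would need additional justification that it still injects the marker entropy.
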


The proof of the first theorem will consist of two main pieces.  Our $1$-Dini function will be $f$.  First we will show that every relative equilibrium state of $f$ over a fully supported $\nu$ is itself fully supported.  Then we show that if $\mu$ is a relative equilibrium state of $f$ which is fully supported, $ h(\mu)-h(\mu\circ\piinv) > 0$.  

The first part is proved by assuming the proposed relative equilibrium state $\mu$  is not fully supported.  We are then be able to spread measure around in a way which increases relative entropy.  This will also change the integral of $f$.  However, we will exploit $f$ being 1-Dini to bound the change in the integral and show that it is smaller than our entropy gain.  Lemma \ref{yoopaths} allows us to spread our old measure around in a way which is invariant over $Y$, and which is detectable by our new measure.  The second half of the proof is accomplished in much the same manner.
 
\begin{proof}[Proof of the first part of Theorem \ref{thm:1dini}]

Let $\nu$ be a fully supported measure on $Y$ and $\mu$ a relative equilibrium state of $f$ over $\nu$.  Thus $\mu$ is one of the measures for which the relative pressure of $f$ is equal to $ h(\mu)-h(\nu) + \int f \, d\mu$.  Assume that $\mu$ is not fully supported.  Then by applying Lemma \ref{yoopaths} to $Z = \text{spt}(\mu)$, we obtain two words $u$ and $v$ which have the same image and meet at their endpoints.  In addition, $u$ occurs in Z and $v$ does not, which implies $\mu(u)>0$ and $\mu(v)=0$.  So blocks of $u$ occur with positive frequency in typical points for $\mu$.  

The method of constructing the new higher entropy measure $\mubar$ is to occasionally swap $u$ for $v$, spreading measure outside of $Z$ and generating entropy.    

In order to avoid ambiguity about the order $u$'s get swapped, because they may overlap, we make sure swapping does not occur closer than $|u|=n$ steps.  The fourth property of the lemma makes sure that when we see a $v$ in the image of a point from $Z$, we know that if we swap it back to a $u$ we recover the original point.

To be more precise, let $0<p<1$ and consider the full 2-shift $\Omega = \{0,1\}^{\mathbb{Z}}$.   Let $\psi:\Omega \to S$ be a factor map defined by letting $\psi(\omega)_0 = 1$ when $\omega_{-n+1}^{0} \in [0^{n}1]_{-n+1}^{0}$.  Otherwise, $\psi(\omega)_0 = 0$.  Now we construct a measure $\eta$ on $S$ by pushing forward a Bernoulli measure that has $1$ occurring with frequency $p$.

In order to make a map for switching $u$'s into $v$'s, we construct $\phi:X\times S \to X$ as follows.  If $s_i=1$ and $T^{i}(x) \in [u]$ then $\phi(x,s)_{i}^{i+n-1}=v$.  Otherwise $\phi(x,s)_i = x_i$.  The construction of $S$ ensures that all $1$'s are at least $n$ apart.  As discussed above, this implies that we can recover the original point and thus $\phi$ is well defined and commutes with the natural shift on $X \times S$.  Furthermore, we have $\pi(x)=\pi(\phi(x,s))$  for all $(x,s) \in X\times S$.   In addition, if $x \in Z$ then $\phi(x,s)_{-n+1}^{n-1}$ determines $x_0$.  This is due to the last two properties of Lemma \ref{yoopaths}.

The measure $\mubar$ will be a push forward of $\mu \times \eta$ on $X \times S$.   We need to compare the relative entropies of $\mu$ and $\mubar$, which is the same as comparing their entropies because both measures live over $\nu$.  Let $\Q$ be the partition of $X\times S$ generated by the first symbol of $\phi(x, s)$.  Let $\Part$ and $\R$ be the state partitions of $X$ and $S$.  Where appropriate, these will also stand for the pullback of the partitions to $\XS$.  Using the notation $\Part_i^j = \bigvee_{k=i}^{j} T^{k}(\Part)$, we can express the three principal entropies as:
\begin{align*}
h(\mubar) &= H(\Q \mid \Q_{-\infty}^{-1}) \\
h(\mu) &=  H(\Part \mid \Part_{-\infty}^{-1}) \\
h(\eta) &= H(\R \mid \R_{-\infty}^{-1}) \\
\end{align*}

Note that for $\mu\times \eta$-a.e. point in $\XS$, $x_0$ is determined by $\phi(x,s)_{-n+1}^{n-1}$.  So the measurable partition $\Part$ generated by $x_0$ is coarser than $\Qtilde = \Q_{-n+1}^{n-1}$.  This allows us to apply Pinsker's formula to say that

\begin{align*}
h(\mubar) &= H(\Qtilde \mid \Qtilde_{-\infty}^{-1})\\
&= H(\Part \mid \Part_{-\infty}^{-1}) + H(\Qtilde \mid \Part_{-\infty}^{\infty}\vee \Qtilde_{-\infty}^{-1})\\
&= h(\mu) + H(\Q \mid \Part_{-\infty}^{\infty}\vee \Q_{-\infty}^{-1})
\end{align*}

We note that 
\begin{align*}
H(\Q_{0}^{n-1} \mid \Part_{-\infty}^{\infty}\vee \Q_{-\infty}^{-1}) = & H(\Q_{0} \mid \Part_{-\infty}^{\infty}\vee \Q_{-\infty}^{-1}) +  H(\Q_{1} \mid \Part_{-\infty}^{\infty}\vee \Q_{-\infty}^{0}) \\
&+ \hdots +  H(\Q_{n-1} \mid \Part_{-\infty}^{\infty}\vee \Q_{-\infty}^{n-2}) =n H(\Q \mid \Part_{-\infty}^{\infty}\vee \Q_{-\infty}^{-1})
\end{align*} 

 Now applying Lemma \ref{yooreslemma} allows us to say that

\begin{align*}
 H(\Q &\mid \Part_{-\infty}^{\infty}\vee \Q_{-\infty}^{-1}) = \frac{1}{n} H(\Q_{0}^{n-1} \mid \Part_{-\infty}^{\infty}\vee \Q_{-\infty}^{-1}) \\
&\geq \frac{\mu([u])}{n} H_{[u]\times S}(\Q_{0}^{n-1} \mid \Part_{-\infty}^{\infty}\vee \Q_{-\infty}^{-1})
\end{align*}

When we restrict our view to $[u]$, $\Q_{0}^{n-1}$ is either $u$ or $v$, and this entirely depends on $s_0$.  In addition, for $x \in [u]$, $\Part_{-\infty}^{\infty}\vee \R_{-\infty}^{-1}$ determines $\Q_{-\infty}^{-1}$.  Thus
\begin{align*}
 H_{[u]\times S}(\Q_{0}^{n-1} &\mid \Part_{-\infty}^{\infty}\vee \Q_{-\infty}^{-1}) \\
&=  H_{[u]\times S}(\R \mid \Part_{-\infty}^{\infty}\vee \Q_{-\infty}^{-1}) \\
&\geq H_{[u]\times S}(\R \mid \Part_{-\infty}^{\infty}\vee \R_{-\infty}^{-1}) \\
&= H(\R \mid \R_{-\infty}^{-1}) \\
&= h(\eta)
\end{align*}
The equality in the fourth line is due to Lemma \ref{yooindlemma}

In the end, we have shown that
\begin{equation} \label{1stest}
H(\Q \mid \Part_{-\infty}^{\infty}\vee \Q_{-\infty}^{-1}) \geq \frac{\mu([u])}{n}h(\eta)
\end{equation}

Now we bound the entropy of $\eta$ by comparing it to the Bernoulli measure $b$ on $\Omega$.  We will compare the measures using the $\bar{d}$ metric.  Let $J(\eta,b)$ be the set of all joinings of $\eta$ and $b$.  Let $\delta_{0}(s,\omega)$ be $1$ if $s_0 \neq \omega_0$ and $0$ otherwise.  Then the $\dbar$ distance between $\eta$ and $b$ can be written as
\[
\dbar(\eta,b) = \inf_{\hat{\mu} \in J(\eta,b)} \int{ \delta_{0}(s,\omega) \, d\hat{\mu}}
\]
Let us construct a joining of $\eta$ and $b$.  For cylinder sets $C \subset S$ and $D \subset \Omega$ let $\hat{\mu}(C\times D) = b(\psi^{-1}(C) \cap D)$.  This measure is invariant under $\sigma \times \sigma$.  We also have that $\hat{\mu}(C \times \Omega) = b(\psi^{-1}(C) \cap \Omega) = \eta(C)$ and $\muhat(S \times D) = b(D)$.  So $\muhat$ is a joining, and thus
\[
\dbar(\eta,b) \geq \int \delta_{0}(s,\omega) \, d\hat{\mu}
\]
If $(s,\omega)$ is a typical point for $\muhat$ then $s_0 = \omega_0$ except when $\omega_0 = 1$ and $\omega_{-n+1}^{-1}$ contains at least one additional $1$, which is an $O(p^2)$ event.  We know $1$'s occur with probability $p$ in $b$, thus
\[
\dbar(\eta,b) = O(p^2)
\]
As we will be constructing a series of upper and lower bounds, it will be convenient to introduce the terminology $f(n)$ is  $\Omega^{+}(g(n))$ if $f(n) \geq \vert c g(n) \vert$ for some constant $c$. From Theorem \ref{dbarent} we have
\[
h(\eta)-h(b) = O(p^2 \log p)
\]

Combining this with our estimate \eqref{1stest}, we have
\[
h(\mubar) - h(\mu) = \frac{\mu([u])}{n}(\Omega^{+}(p \log p)+ O(p^2 \log p)) = \Omega^{+}(p \log p)
\]

To bound the difference of the integrals of $f$, note that for $\mu \times \eta$-a.e. $(x,s) \in \XS$, $x$ is a typical point for $\mu$ and $\phi(x,s) = \bar{x}$ is a typical point for $\mubar$.  So the ergodic theorem tells us that
\[
   \left\vert \int f \, d\mu - \int f \, d\mubar \right\vert \leq \lim_{N \to \infty} \frac{1}{N} \sum_{i=0}^{N-1}\left\vert f(T^{i}x) - f(T^{i}\xbar) \right\vert
\]
The only places where $x$ and $\xbar$ differ are where a $u$ from $x$ has been changed to a $v$ in $\xbar$.  This happens on average $\mu([u])\eta([1])N$ times in a run of $N$.  Since $f$ is $1$-Dini we can say $\sum_{n} \left( \text{var}_{n}(f) \right) = L$.  So the most $f(T^{i}x)$ and $f(T^{i}\xbar)$ can differ by is $L$ and each $v$ can contribute at most $nL$ to the sum above.  Accounting for possible tail contributions, we have that for all $\epsilon > 0$ there is an $N_{0}$ such that for $N > N_{0}$
\[
 \frac{1}{N}\sum_{i=0}^{N-1}\left\vert f(T^{i}x) - f(T^{i}\xbar) \right\vert \leq \mu([u])\eta([1])Ln + \frac{2L}{N} + \epsilon
\]
Taking limits and using the fact that $\eta([1]) = O(p)$, we estimate the integral as
\[
 \left\vert \int f \, d\mu - \int f \, d\mubar \right\vert = O(p)
\]

Finally, since the entropy term is $\Omega^{+}(p \log p)$ and the integral is only $O(p)$ we have that for $p$ small enough, $h(\mubar) + \int f \, d\mubar - h(\mu) - \int f \, d\mu > 0$.  However, $\mu$ was assumed to have maximal relative entropy over $\nu$.  This proves that $\mu$ is fully supported.
\end{proof}

Now we must prove that the now-fully supported relative equilibrium state $\mu$ has positive relative entropy.

\begin{proof}[Proof of the second part of Theorem \ref{thm:1dini}]
Assume that $h(\mu \mid \nu) = 0$.  Proceeding similarly to the first part of the proof we will construct a new measure $\mubar$ which has greater relative pressure.  Our estimate will be showing that the difference in relative entropies is greater than in the integrals. 

Once again, $\mubar$ is constructed by taking a pair of words $u$ and $v$ which can be swapped in a point without affecting the eventual image in $Y$.  Here, we do not need the strong detectability conditions provided by Lemma \ref{yoopaths}.  We can find such a pair of paths from the fact that $\pi$ is infinite-to-one, and thus there exist a pair of paths $u$ and $v$ which form a diamond over $Y$.    

 Let $\vert u \vert = n$.  Construct the approximately Bernoulli process $(S,\sigma,\eta)$ as before.  Thus no two $1$'s occur in $S$ closer than $n$.   We construct $\mubar$ by pushing $\mu \times \eta$ forward by $\phi$ which changes $u$ into $v$ and $v$ into $u$ whenever $s_0 = 1$.

The entropy term we need to estimate is $h(\mubar \mid \nu) - h(\mu \mid \nu) = h(\mubar \mid \nu)$.  Let $\mathcal{P}$ be the partition on $\XS$ from the $(X,\mu)$ process, $\bar{\mathcal{P}}$ from the $(X,\mubar)$ process, $\mathcal{Q}$ from the $(S,\eta)$ process and $\mathcal{R}$ from the $(Y,\nu)$ process.  Then

\begin{align*}
h(\mubar \mid \nu) &= H(\bar{\mathcal{P}}_0 \mid \bar{\mathcal{P}}_{-\infty}^{-1} \vee \mathcal{R}_{-\infty}^{\infty}) \\
&\geq H(\bar{\mathcal{P}}_0 \mid \bar{\mathcal{P}}_{-\infty}^{-1} \vee \mathcal{P}_{-\infty}^{\infty}) \\
&\geq \mu([u]\cup[v])H_{[u]\cup[v] \times S}(\bar{\mathcal{P}}_0 \mid \bar{\mathcal{P}}_{-\infty}^{-1} \vee \mathcal{P}_{-\infty}^{\infty}) \\
&=\frac{ \mu([u]\cup[v])}{n}H_{[u]\cup[v] \times S}(\bar{\mathcal{P}}_{0}^{n-1} \mid \bar{\mathcal{P}}_{-\infty}^{-1} \vee \mathcal{P}_{-\infty}^{\infty})
\end{align*}

The third inequality is from Lemma \ref{yooreslemma}.  In this last term, we have restricted our view to being at either $u$ or $v$.  Knowing $\mathcal{P}_{-\infty}^{\infty}$ means that finding out which one $\bar{\mathcal{P}}_{0}^{n-1}$ is exactly tells us if $s_0 = 1$ or $s_0 = 0$.  This allows us to say
\begin{align*}
H_{[u]\cup[v] \times S}(\bar{\mathcal{P}}_{0}^{n-1} \mid \bar{\mathcal{P}}_{-\infty}^{-1} \vee \mathcal{P}_{-\infty}^{\infty}) &= H_{[u]\cup[v] \times S}(\mathcal{Q}_0 \mid \bar{\mathcal{P}}_{-\infty}^{-1} \vee \mathcal{P}_{-\infty}^{\infty}) \\
&\geq  H_{[u]\cup[v] \times S}(\mathcal{Q}_0 \mid \mathcal{Q}_{-\infty}^{-1} \vee \mathcal{P}_{-\infty}^{\infty}) \\
&= H(\mathcal{Q}_0 \mid \mathcal{Q}_{-\infty}^{-1}) \\
&=h(\eta)
\end{align*}
Here, the last equality is due to the independence of $X$ and $S$, and a final use of Lemma \ref{yooindlemma}.  We have already calculated $h(\eta)$, and so our estimate on the relative entropies is
\[
h(\mubar \mid \nu) - h(\mu \mid \nu) = \Omega^{+}(p \log p)
\]

From here, we must again estimate the integral term.  One way to do this is to construct a joining of $\mu$ and $\mubar$.  Our joining will be $\muhat$ defined on products of cylinder sets from $X$  as $\muhat(A \times B) = \mu(\pi_{X}^{-1}(A) \cap \phi^{-1}(B))$.  Note that one way to construct a typical pair of points for $\muhat$ is to pick $(x, \xbar)$ from $X \times X$ where $x$ is typical for $\mu$ and $\xbar = \phi(x,s)$ for some typical $s$.  The number of times $x$ and $\xbar$ will then differ is $O(p)$.  

The difference in the integrals is bounded by $\lim_{N \to \infty} A_{N}(\vert f(x)-f(\xbar)\vert)$, where $A_{N}$ is the ergodic average.  In each place where $x$ and $\xbar$ differ we receive a constant bounded contribution to the sum because $f$ is 1-Dini.  Each tail end also gives a bounded contribution, which is constant in $N$.  The contribution from differences is $O(Np)$ and the tail contribution is $O(1)$ so in the limit we have that the integral is $O(p)$.  Together with our earlier estimate that the relative entropy gain is $\Omega^{+}(p \log p)$, this tells us that $\mubar$ has strictly greater relative pressure than $\mu$, a contradiction.  
\end{proof}

\section{Walters-type compensation functions}

For our second theorem we will construct a function $f$ which is simultaneously $p$-Dini for all $p>1$, and whose relative equilibrium states live on a MPW-subshift of $X$.  From this it is clear that their relative entropy over measures on $Y$ is zero.

\begin{thm} \label{thm:pdini}
Let $  F = ( X, Y, \pi ) $ be a factor triple with $X$ an irreducible shift of finite type, and $Y$ a factor of $X$ by $1$-block code $\pi$.  Then there exists a Walters-type compensation function $f \in C(X)$ which is $p$-Dini for all $p>1$
\end{thm}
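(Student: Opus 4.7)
The plan is to construct $f$ explicitly as a negative function vanishing on the MPW-subshift $X_0$ and decaying slowly away from it. For each $x \in X$, let
\[
n(x) = \min\{n \geq 0 : \text{some subword of } x_{-n} \cdots x_n \text{ fails to lie in } \U\},
\]
with the convention $n(x) = \infty$ iff $x \in X_0$. Set $f(x) = -C/(n(x)+1)$ for a constant $C > 0$ to be chosen. Continuity and the $p$-Dini property are then immediate: if $x$ and $y$ agree on $[-m,m]$ then either $n(x) = n(y) \leq m$ and $f(x) = f(y)$, or both $n(x), n(y) > m$ and $|f(x)-f(y)| \leq C/(m+1)$. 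Hence $\var_m(f) \leq C/(m+1)$, so $\sum_m \var_m(f)^p \leq C^p \sum_m (m+1)^{-p} < \infty$ for every $p > 1$. The failure of $f$ to be $1$-Dini is essential: Theorem \ref{thm:1dini} would otherwise force $f$ to be oracle-type.

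To show $f$ is a Walters-type compensation function it suffices to verify that for every $\nu \in \M(Y)$,
\[
W_\nu(f) := \sup\left\{ h(\mu \mid \nu) + \int f\, d\mu \ :\ \mu \circ \piinv = \nu\right\} = 0,
\]
attained only by $\mu$ supported on $X_0$. Since $\pi|_{X_0}$ is surjective and finite-to-one, $\nu$ admits a lift $\mu_0 \in \M(X_0)$; finite-to-oneness gives $h(\mu_0 \mid \nu) = 0$, and $\int f\, d\mu_0 = 0$ since $f \equiv 0$ on $X_0$, yielding $W_\nu(f) \geq 0$. Granting the upper bound $W_\nu(f) \leq 0$, the decomposition $\Pres_X(f + \phi\circ\pi) = \sup_\nu\{h(\nu) + \int \phi\, d\nu + W_\nu(f)\}$ collapses to $\Pres_Y(\phi)$ for every $\phi \in C(Y)$, so $f$ is a compensation function, and the attainment of the sup only on $X_0$ forces the relative equilibrium states there.

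The crux is $W_\nu(f) \leq 0$, with strict inequality whenever $\mu(X \setminus X_0) > 0$. I would mirror the interleaving argument in the proof of Theorem \ref{thm:1dini} but run it in the opposite direction. Pass to an ergodic component to assume $\mu$ ergodic with $\mu(X_0) = 0$, and choose a non-MPW-minimal word $v$ with $\mu([v]) > 0$ together with its MPW-minimal diamond partner $u$ of the same length $\ell$. Construct $\mubar$ by pushing forward $\mu \times \eta$ (with $\eta$ the near-Bernoulli measure on a shift spacing $1$'s at least $\ell$ apart, parameter $p$, as in the first theorem) under the map replacing each occurrence of $v$ by $u$ whenever the auxiliary symbol is $1$. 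This preserves fibres over $\nu$. The relative entropy loss is $O(p)$, controlled by the logarithm of the maximal size of a diamond in $X$; but since $\var_n(f) = C/(n+1)$ is not summable, the integral of $f$ strictly increases by $\Omega(p|\log p|)$, since each swap pushes typical points closer to $X_0$ over a window of length $\sim 1/p$ and $\sum_{n=1}^{1/p} C/(n+1) \sim C|\log p|$. For $C$ large enough the integral gain dominates the entropy loss, contradicting maximality of $\mu$.

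The main obstacle is making the $\Omega(p|\log p|)$ estimate robust across all ergodic $\mu$ with $\mu(X_0) = 0$, including those whose MPW-violations are already very dense, where the swap only slightly increases $n(\cdot)$. Splitting by the violation density $\rho = \mu(\{n = 0\})$ — small $\rho$, where the logarithmic gain in the integral is decisive, versus $\rho$ bounded away from zero, where $\int(n+1)^{-1}\,d\mu$ admits a direct positive lower bound — and then picking a single $C$ dominating the entropy cost $\leq \rho \log K$ in both regimes via a Kac-type accounting of inter-violation return times is the technical heart of the argument.
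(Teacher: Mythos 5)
Your construction follows the paper's up to a point: you define a potential depending only on the distance to the MPW-subshift $X_0$, you verify the $p$-Dini property the same way, and you reduce the compensation claim to showing $W_\nu(f)\le 0$ with equality only on $X_0$-supported lifts, using the standard pressure decomposition and the finite-to-one lift of $\nu$ into $X_0$ for the reverse inequality. That part is sound. But from there your route is genuinely different from the paper's, and it has gaps serious enough that I do not believe it can be closed as stated.

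The central problem is the claimed integral gain $\Omega(p|\log p|)$. When you swap $v\to u$ at density $O(p)$, the increase in $n(T^i x)$ at a given position $i$ is controlled by the distance from $i$ to the \emph{nearest remaining violation}, and this is a quantity tied to the spacing of violations in $\mu$-typical points, not to $1/p$. A single removed violation lifts $n(\cdot)$ from $\sim 0$ to at most the half-gap between its two neighbours, which is a $\mu$-dependent constant. So for fixed $\mu$ the gain per swap is $O(1)$ and the total integral gain is $\Theta(p)$, not $\Theta(p\log(1/p))$. Meanwhile the relative entropy loss you need to dominate is itself $O(p\log(1/p))$ (this is the right estimate, coming from a $\bar d$ bound, but note you never establish it: your $\phi$ collapses $v$ and $u$, so unlike the map in the proof of Theorem~\ref{thm:1dini} it is \emph{not} invertible on $\text{spt}(\mu)$, and the invertibility was precisely what made the entropy bookkeeping clean there — you cannot simply "run that argument in the opposite direction"). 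With gain $\Theta(p)$ against loss $\Theta(p\log(1/p))$, the comparison fails as $p\to 0$; and for $p$ away from $0$ the $\bar d$ bound you need deteriorates. Your last paragraph concedes this difficulty, but the proposed patch — splitting by $\rho=\mu(\{n=0\})$ and invoking Kac — is only a sketch, and it is attacked by measures whose violations come in long sparse clumps, for which $\rho$ is small but $h(\mu\mid\nu)$ is not and $\int(n+1)^{-1}\,d\mu$ is not simply a function of $\rho$.

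There is a further, more structural issue with your choice $f(x)=-C/(n(x)+1)$: it is probably not sharp enough. The paper's potential carries an extra $\log n(x)$ factor, $f(x)=-t\log n(x)/n(x)$, and this is not cosmetic. In the paper's argument (which proceeds by an entirely different mechanism: the \emph{clothespinning} extension $\bar X\to X$ of Lemmas~\ref{finiteclothespins} and~\ref{clothespinmeasure}, inducing on returns of the clothespin at the origin, Abramov's formula, and a Jensen-inequality bound against the induced state partition) the accumulated penalty $f^2_C$ over a return window of length $n_2$ is bounded by $-t\log n_2$, growing with $n_2$. That growth is exactly what is needed to absorb the $\sim\log n_2$ bits of positional entropy from choosing where the next clothespin falls, and to make the Jensen sum $\sum_{n_1,n_2}(1/n_2)^t$ converge for $t>2$. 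With your $f$, the same accounting yields a penalty of $-C$ per return window, constant in $n_2$, which is not enough to close the summability argument; I see no way to salvage it within that framework, and your interleaving framework, as argued above, does not supply an alternative. In short: your $p$-Dini verification and compensation bookkeeping are fine, but the potential is likely too weak and the interleaving estimate at the heart of the proposal is unjustified and in the problematic regime false.
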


Note that with a different choice of $f$, an argument similar to the one which follows could be used to prove a theorem of the form: \dots for all $p>1$ there exists a Walters-type compensation function $f$ which is $q$-Dini for all $q>p$, but which is not $q$-Dini for $q \leq p$ \dots

It is important to note that, while Walters-type compensation functions are shown to exist under these conditions in \cite{Wcompfn}, the proof of their existence is not constructive.  In particular, it makes no claims about the type of continuity exhibited by the compensation function.  

The main idea of the proof is to construct a finite extension of $X$ based on subdividing points into minimal subwords and considering the space of points paired with the dividers which delineate these subwords.  This extension encodes a sufficient amount of information to reproduce the relative entropy, and is easily partitioned into sets where the integral can be bounded.  By inducing on returns to the dividers we will be able to show that the potential to increase relative entropy by straying from the MPW-subshift is outweighed by the penalty incurred in the integral.

\subsection{Clothespinning sequences}

For points $x$ which are not in the MPW-subshift we will define a set of  ``clothespins'' of $x$ such that the words between adjacent clothespins are minimal, but the words between nonadjacent clothespins are not.  By showing that there are only finitely many distinct ways to accomplish this clothespinning we will be able to define a space of clothespinned points which lives over $X$ in a finite-to-one way.

Throughout this section we will be working with a factor triple $(X, Y, \pi)$ where $X$ is a shift of finite type and $Y$ is a sofic image of $X$ under a $1$-block code $\pi$.  We will assume that some MPW-order has been put on the symbols in $X$ and that $X_0$ is the associated MPW-subshift.  Recall that the set of words which are MPW-minimal (given their endpoints and image) is $\U$.  As in the first proof, let $\Omega = \{0, 1\}^{\Z}$.

\begin{defn} \label{clothespinsdef}
For $x \in X$ consider the following process.  Let $n_{0}^{N}(x) = -N$.  Given $n_{k-1}^{N}(x)$ define $n_{k}^{N}(x) = \min \left\{ i > n_{k-1}^{N}(x) \mid x_{n_{k-1}^{N}} \hdots x_{i+1} \not\in \U \right\} $.  If some $n_{k}^{N}(x) = \infty$, terminate the process.  Define $s^{(N)}(x) \in \Omega$ by $s^{(N)}_{i}(x) = 1$ if $i \in \left\{ n_{k}^{N}(x) \right\}_{k=0}^{\infty}$.  If $s$ is a limit of $s^{(N)}(x)$ then we will call $s$ a \emph{clothespinning sequence} of $x$.  A position $i$ for which $s_i = 1$ will be called a \emph{clothespin} of $x$.

\end{defn}

Note that if $x \in X_{0}$ then $s^{(N)}(x)$ has exactly one clothespin at $-N$ and all $0$'s otherwise.  So the only clothespinning sequence of a point in $X_{0}$ is the fixed $0$ sequence.

\begin{figure}[h]
\centering{
\psfrag{$x$}{$x$}
\psfrag{$s$}{$s$}
\psfrag{...0010...}{\dots 0010 \dots}
\psfrag{$n_i$}{$n_{i}$}
\psfrag{$n_i+2$}{$n_{i+2}$}
\psfrag{$n_i+1$}{$n_{i+1}$}
\psfrag{$w$}{$w$}
\psfrag{$pi(x)$}{$\pi(x)$}
\includegraphics[height=1.5in]{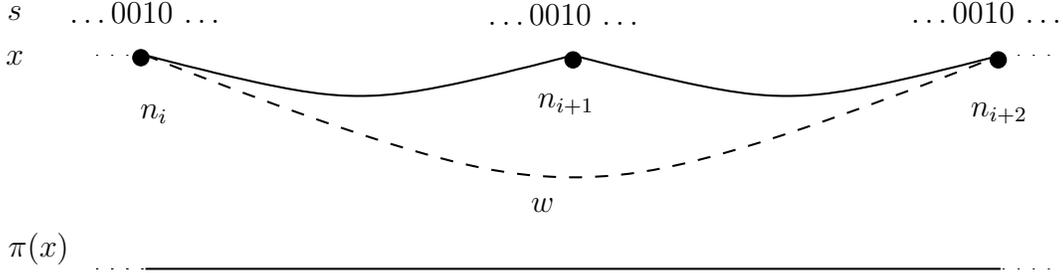}}
\caption{Between adjacent clothespins $x$ is minimal, but if we pull out $n_{i+1}$ we see the word $w$ hangs lower than $x_{n_{i}}^{n_{i+2}}$}
\label{fig:clothespins}
\end{figure}

The most important property of clothespinning sequences is that if $s_{i}^{j} = 10^{j-i-1}1$ then $x_{i}^{j} \in \U$, but $x_{i}^{j+1} \not\in \U$. A few additional facts about clothespinnings are summarized in the following lemma.
\begin{lem} \label{finiteclothespins}
Let $\mu \in \M(X)$ be an ergodic measure such that $\mu(X \setminus X_0 ) > 0$.  Then there exists a $k$ and a set $CP(X)$ with $\mu( CP(X) ) = 1$ such that every point in $CP(X)$ has exactly $k$ distinct clothespinning sequences.
\end{lem}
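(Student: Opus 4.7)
The plan is to show that $N(x) := |S(x)|$, where $S(x) \subset \Omega$ is the set of accumulation points of $\{s^{(N)}(x)\}_{N \geq 1}$ in the compact space $\Omega$, is a shift-invariant Borel function on $X$ taking values in $\mathbb{N} \cup \{\infty\}$, apply ergodicity to get $\mu$-a.e.\ constancy $N \equiv k$, and then bound $k$ using the finite-state nature of the MPW language $\U$. For the preliminary reduction, $X_0$ is shift-invariant (defined by excluded subwords), so $\mu(X \setminus X_0) > 0$ together with ergodicity forces $\mu(X \setminus X_0) = 1$; moreover, by the ergodic theorem, every word outside $\U$ occurs with positive frequency in both tails of $\mu$-a.e.\ $x$, so each accumulation point of $\{s^{(N)}(x)\}$ has $1$s extending to $\pm\infty$ and thus is a bona fide clothespinning sequence. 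The set $S(x)$ is closed, nonempty, and Borel in $x$, and a direct comparison of greedy definitions gives $S(Tx) = \sigma S(x)$; thus $N$ is shift-invariant, and ergodicity yields $N \equiv k \in \mathbb{N} \cup \{\infty\}$ on a Borel set $CP(X)$ of full measure.

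The next step is to give $S(x)$ a local, sofic description amenable to a finite-state analysis. I would verify that for $s$ with $1$s accumulating to $\pm\infty$, one has $s \in S(x)$ if and only if for every pair of consecutive $1$s of $s$ at positions $i < i'$ the word $x_i \ldots x_{i'}$ lies in $\U$ while $x_i \ldots x_{i'+1}$ does not. The forward direction is just the greedy rule. For the converse, pick a sequence $N_k \to \infty$ with $-N_k$ a clothespin of $s$; forward-determinism of the greedy rule then gives $s^{(N_k)}(x)|_{[-N_k, \infty)} = s|_{[-N_k, \infty)}$, so $s^{(N_k)} \to s$ pointwise. This realizes (modulo the trivial zero sequence) the pair set $\tilde X := \{(x, s) : s \in S(x)\}$ as the intersection of $X \times \Omega$ with a sofic subshift, since $\U$-membership is decidable by a finite-state automaton $\mathcal{A}$ with state set $Q$ that tracks, for each candidate endpoint pair, which lexicographically smaller rival lifts have not yet been ruled out.

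The hard part will be converting the finite state space $Q$ into a finite bound on $|S(x)|$. The key alignment property is the following: setting $j_s(i)$ to be the largest clothespin of $s$ with $j_s(i) \leq i$ and $q_s(i) \in Q$ the state of $\mathcal{A}$ after reading $x_{j_s(i)} \ldots x_i$, two sequences $s, s' \in S(x)$ with $q_s(i) = q_{s'}(i)$ produce identical continuations of $\mathcal{A}$ on $x_{i+1}, x_{i+2}, \ldots$ and hence share the next clothespin after $i$, agreeing by iteration on a right half-line. Consequently, at each $i$ the equivalence classes in $S(x)$ of ``eventual agreement on the right'' are indexed injectively into $Q$, so there are at most $|Q|$ of them. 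A second application of the alignment argument going leftward within each class, combined with ergodic recurrence of forbidden patterns (which separates distinct pasts by forcing a mismatch of states at some position), then bounds the number of sequences in each class, giving $|S(x)| < \infty$ with a uniform bound depending only on the factor triple, so $k < \infty$ as required.
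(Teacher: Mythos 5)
Your approach is genuinely different from the paper's and the central idea is sound. The paper bounds the number of right‐tails $k_n(x)$ by noting that between adjacent pins of any one clothespinning every other clothespinning must carry a pin (the ``alignment'' observation), and then removes the dependence on $n$ by a Poincar\'e recurrence argument: the set of points whose last coincidence of pins occurs at a fixed time $m$ is a wandering set, hence null, so $k_n(x)$ is constant in $n$ almost everywhere. You instead aim for a uniform, measure-free bound $|S(x)|\leq|Q|$ supplied by a finite automaton for $\U$, and then invoke ergodicity only once at the very end for constancy. If it goes through, your version is stronger (uniform bound, no Poincar\'e) but pays for it by needing the regularity of $\U$.

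There are two places where the writeup has real gaps. First, the regularity of $\U$ is asserted, not shown, and it is the load-bearing step. It is in fact true for a $1$-block code on an SFT: one can take as automaton state the pair (current $X$-symbol, set of alphabet symbols $a$ for which there exists a lexicographically strictly smaller $X$-path with the same start, same $\pi$-image, and current symbol $a$), and accept when the current symbol is not in that set; the transition rule is local because $X$ is a $1$-step SFT and $\pi$ is $1$-block. Without at least this sketch, the reader has no reason to believe the automaton $\mathcal{A}$ with finite state set $Q$ exists, and for general subshifts $X$ it would not.

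Second, the final counting step is muddled and the extra machinery you invoke is both unnecessary and unconvincing. Once you know that at each $i$ the map $s \mapsto q_s(i)$ determines $s_{(i,\infty)}$, you immediately get that for every $i$ the number of distinct right‐tails $\{s_{(i,\infty)} : s \in S(x)\}$ is at most $|Q|$. Since any two distinct elements of $S(x)$ differ at some finite coordinate, sending $i\to-\infty$ already gives $|S(x)|\leq|Q|$; there is no residual ``number of pasts within a class'' to bound. The ``second application of the alignment argument going leftward'' and the appeal to ``ergodic recurrence of forbidden patterns separating distinct pasts'' do not correspond to a clear argument---recurrence plays no visible role in distinguishing two sequences that already differ at some coordinate---and they suggest you thought the per-$i$ bound controls only a coarser equivalence than it actually does. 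Cleaning up this step (drop the second argument, state the $i\to-\infty$ observation) and supplying the regularity of $\U$ would make your proof complete and, I think, a nice alternative to the Poincar\'e-based argument in the paper.
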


\begin{proof}
Let $CP_{0}(X) = \left\{ x \in X \mid \forall \, i \in \Z \, \exists \, j>i \, \text{ such that } \, x_{i}^{j} \not\in \U \right\}$.  The ergodic theorem tell us that since $\mu(X \setminus X_0) > 0$, $\mu$-a.e. point must encounter words which are not MPW-minimal infinitely often.  Thus $\mu$-a.e. point is in $CP_{0}(X)$.

Let $x \in CP_{0}(X)$.  Each new clothespin's construction only depends on the previous clothespin, so if $s$ and $s'$ are two distinct clothspinning sequences for $x$ and $s_{i} = s'_{i} = 1$ then $s_{j} = s'_{j}$ for all $j > i$.   Additionally, for any two clothespinnings $s$ and $s'$,  if $i_1$ and $i_2$ are adjacent clothespins in $s$, and $j_1$ and $j_2$ are adjacent in $s'$ with $j_1 \leq i_1$, then $j_2 \leq i_2$. For if $j_2 > i_2$ then $x_{j_1}^{j_2}$ contains $x_{i_1}^{i_{2}+1}$ which is nonminimal.  Thus $s'$ has a pin between $i_1$ and $i_2$, and the position of this pin determines all pins to the right.  From this we can conclude that for any $n \in \Z$ the number of distinct clothespinnings over $x_{n}^{\infty}$ is finite, and decreasing as $n \to \infty$.

Define $k_{n}(x)$ to be $\left\vert \{s_{n}^{\infty} \mid s \text{ is a clothespinning sequence of } x \} \right\vert$.   The preceding explanation shows us that $k_{n}(x)$ is decreasing and $k_{n}(x) \geq 1$, so we can let $k(x) = \lim_{n \to \infty} k_{n}(x)$.  Consider the set $A_{m} = \{ x \in CP_{0}(X) \mid k_{m}(x) = k(x) \text{ and } k_{m-1}(x) > k(x) \}$.  As we observe the clothespinning sequences of $x$ going to toward $\infty$, we lose a sequence whenever two clothespins from previously distinct pinnings coincide.  The set $A_{m}$ represents the set of $x$ whose ``last coincident pin'' occurs at time $m$.  Note that if $x \not\in A_{m}$ for any $m \in \Z$ then $x$ has a finite number of clothespinning sequences in total.  However, $A_{m}$ is not a set we can return to once we shift a point by $T$.  So by the Poincar\'{e} recurrence theorem, $\mu(A_{m}) = 0$ for all $m \in \Z$.  So $\mu$-a.e. point in $CP_{0}(X)$ has finitely many distinct clothespinning sequences.  Now, because $\mu$ is ergodic and $k(x)$ is shift invariant, $k(x)$ is equal to a constant almost everywhere.  In particular, $\mu$-a.e. $x$ has $k_{n}(x) =k$ so in a sense the number of clothespinnings we see locally is constant.  Thus we can define $CP(X) = \{ x \in CP_{0}(X) \mid k_{n}(x) = k \, \forall \, n \}$. 
\end{proof}

The next lemma is needed to ensure that we can build a measure on the clothespinned space which projects to a given measure on $X$.

\begin{lem} \label{clothespinmeasure}  Let $\mu \in \M(X)$ be a measure such that $\mu(X \setminus X_0) > 0$.  Let $\bar{X}$ be the set of pairs $(x,s)$ where $x \in CP(X)$ and $s$ is a clothespinning sequence for $x$ and $\bar{T}(x,s)$ be the usual shift on $x$ and $s$.  Let $\pi_{X}(x,s) = x$.  Then there exists a measure $\bar{\mu} \in \M(\bar{X})$ such that $ \bar{\mu} \circ \pi_{X}^{-1} = \mu$.
\end{lem}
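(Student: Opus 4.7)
The plan is to use a Krylov--Bogolyubov-style averaging argument to lift $\mu$ to a $\bar{T}$-invariant measure on $\bar{X}$. For each $N \geq 0$, the truncated clothespinning map $\sigma_N : X \to X \times \Omega$ sending $x$ to $(x, s^{(N)}(x))$ is measurable; let $\mu^{(N)} = (\sigma_N)_{*}\mu$, a probability measure on the compact space $X \times \Omega$ whose first marginal is $\mu$.

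The key structural observation is that $\sigma s^{(N)}(x) = s^{(N+1)}(Tx)$, which follows directly from the greedy recursion in Definition \ref{clothespinsdef}: both sequences place their first $1$ at position $-N-1$ (in the shifted coordinates) and, by the minimization rule in the definition, place each subsequent $1$ at the same location. Combined with the $T$-invariance of $\mu$, this yields $(T \times \sigma)_{*} \mu^{(N)} = \mu^{(N+1)}$, so the Cesàro averages $\bar{\mu}_M = \frac{1}{M}\sum_{N=0}^{M-1}\mu^{(N)}$ are asymptotically $(T \times \sigma)$-invariant. By compactness of $X \times \Omega$, some subsequence converges weak-$*$ to a $(T \times \sigma)$-invariant probability measure $\bar{\mu}$ whose first marginal is still $\mu$.

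The main obstacle is verifying that $\bar{\mu}$ is concentrated on $\bar{X}$ rather than merely on $X \times \Omega$. I would characterize $\bar{X}$ by two conditions on $(x,s)$: (a) whenever $s_i = s_j = 1$ with $s_l = 0$ for $i < l < j$, the word $x_i^j$ lies in $\U$ while $x_i^{j+1}$ does not; and (b) either $s$ is identically zero and $x \in X_0$, or else $\{i : s_i = 1\}$ is unbounded both above and below. Condition (a) is a countable intersection of clopen subsets of $X \times \Omega$ and holds for every $(x, s^{(N)}(x))$ by the greedy construction, so it passes to any weak-$*$ limit of the $\bar{\mu}_M$. For condition (b), set $L(s) = \inf\{i : s_i = 1\}$; $(T \times \sigma)$-invariance forces $\bar{\mu}(L = k) = \bar{\mu}(L = k-1)$ for every $k \in \Z$, so $\bar{\mu}(L \in \Z) = 0$ and $L \in \{\pm\infty\}$ almost surely, with a symmetric statement for the supremum of the support. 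A short argument using the definition of a clothespinning as a subsequential limit of the $s^{(N)}(x)$ then matches the $s \equiv 0$ fibre with the $x \in X_0$ fibre and confirms that the remaining $\bar{\mu}$-a.e. pairs $(x,s)$ arise as such subsequential limits, placing full $\bar{\mu}$-measure on $\bar{X}$.
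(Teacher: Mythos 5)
Your Krylov--Bogolyubov approach is a genuinely different route from the paper's. The paper instead invokes Lemma \ref{finiteclothespins} to get the constant $k$, defines the counting function $f_{v}(x)$ as the number of clothespinning sequences over $x$ matching $v$ at the origin, checks the Kolmogorov consistency relations $f_{0v}+f_{1v}=f_{v0}+f_{v1}=f_{v}$ and $\sum_{v}f_{v}=k$, and sets $\bar{\mu}$ on cylinders to be $\frac{1}{k}\int_{[w]}f_{v}\,d\mu$; this directly produces a measure supported on $\bar{X}$ with no limiting argument. Your construction is attractive, and the equivariance observation $\sigma s^{(N)}(x)=s^{(N+1)}(Tx)$, hence $(T\times\sigma)_{*}\mu^{(N)}=\mu^{(N+1)}$, is correct and is precisely what makes Ces\`aro averaging work. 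The closedness of condition (a) and the shift-invariance argument giving $L(s)\in\{\pm\infty\}$ and $U(s)\in\{\pm\infty\}$ $\bar{\mu}$-a.s.\ are also sound.

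The gap is in the final paragraph. You must still rule out positive $\bar{\mu}$-mass on the fibre $\{s\equiv 0\}$, or at least show that $\bar{\mu}$-a.e.\ such pair has $0^{\Z}$ a genuine clothespinning sequence of $x$. Condition (a) is vacuous when $s\equiv 0$, and $L,U\in\{\pm\infty\}$ does not exclude $L=+\infty,U=-\infty$. Your proposed condition (b) is also not obviously a correct characterization of $\bar{X}$: since $CP(X)$ is contained in $CP_{0}(X)$, which is disjoint from $X_{0}$, a pair $(x,0^{\Z})\in\bar{X}$ would require $x\in CP(X)$ and $0^{\Z}$ a clothespinning of $x$, not $x\in X_{0}$. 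Whether $0^{\Z}$ can arise as a subsequential limit of $s^{(N)}(x)$ for $x\in CP(X)$ is a nontrivial question about the density of pins of $s^{(0)}(x)$; if gaps between consecutive nonminimal extensions can grow so that the pin density vanishes on a set of positive $\mu$-measure, the Ces\`aro averages will concentrate on $\{s\equiv 0\}$ and one needs a further argument that $(x,0^{\Z})$ still lies in $\bar{X}$. None of this is addressed, and it is not ``a short argument'' in any obvious way. The paper's direct construction bypasses the issue because $\bar{\mu}$ is assembled from the a.e.\ $k$-to-$1$ structure on $\bar{X}$ itself rather than obtained as a weak-star limit of measures that only approximately live there.
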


\begin{proof}
First let us note that by Lemma \ref{finiteclothespins} we know the factor map $\pi_{X}$ is finite-to-one.  Let $\mu$ be an ergodic measure such that $\mu(X \setminus X_{0}) > 0$.  By Lemma \ref{finiteclothespins} we know that there is a $k$ such that $\mu$-a.e. point in $CP(X)$ has exactly $k$ clothespinning sequences.

Let $B_{n}(\Omega)$ be the words of length $n$ in $\Omega$.  For every word $v \in B_{n}(\Omega)$ and $x \in CP(X)$ let us define $f_{v}(x) = \left\vert \{ (x,s) \in \bar{X} \mid s_{0}^{n-1} = v \} \right\vert$.  In other words, $f_{v}(x)$ is the number of clothespinning sequences over $x$ that ``look like'' $v$ at the origin.  If we let $w$ be a word of length $n$ in $X$ then for any $x \in [w]$ we have $\sum_{v \in B_{n}(\Omega)} f_{v}(x) = k$.  This is because each of the $k$ clothespinning sequences of $x$ appear in at least one of the $v$'s, and they cannot belong to more than one.  Furthermore, we can see that for any $v \in B_{n}(\Omega)$, $f_{0v}(x) + f_{1v}(x) = f_{v0}(x) + f_{v1}(x) = f_{v}(x)$.

Let $C = \left( [w] \times [v] \right)_{i}^{i+n-1}$ be a cylinder set of length $n$ in $\bar{X}$.  We will define $\bar{\mu}$ on such a cylinder by $\bar{\mu}(C) = \frac{1}{k} \int_{[w]_{0}^{n-1}} f_{v}(x) \, d\mu$.  It is not hard to check the usual consistency conditions to see that $\bar{\mu}$ extends to an invariant measure on $\bar{X}$.  Noting that $\pi_{X}^{-1}([w]) = \bigcup_{v \in B_{n}(\Omega) \cap \bar{X}} [w] \times [v]$ we can then show that
\begin{align*}
\bar{\mu} \circ \pi_{X}^{-1}([w]) &= \frac{1}{k} \int_{[w]} \left( \sum_{v \in B_{n}(\Omega)} f_{v}(x) \right) \, d\mu \\
  &= \frac{1}{k} \int_{[w]} k \, d\mu \\
  &= \mu([w])
\end{align*}
So $\bar{\mu}$ projects to $\mu$.
\end{proof}

\subsection{Proof of Theorem \ref{thm:pdini}}

Let $X_0$ be a MPW subshift of $X$.  This subshift is defined by some ordering on the preimages of symbols in $Y$, which establishes a lexicographic order on diamonds, in $X$.  Let $\U$ be the set of words in $X$ which are minimal in the MPW ordering.  

In order to build a function whose relative equilibrium states live on $X_0$ we need a function which attains its maximum on $X_0$ and which penalizes us heavily for straying from it.  However, the function cannot be too sharp, or it will fail to be $p$-Dini for some $p$ near $1$.  Additionally, we would like for the function to rely on some condition which can be checked on a nice partition around the origin.

For $x \in X$, define $n(x)$ to be the least integer such that $x_{-n(x)} \hdots x_{n(x)} \in \U$ but $x_{-n(x)-1} \hdots x_{n(x)+1} \not\in \U$.  If $x \in X_0$ then define $n(x) = \infty$.  This number gives us a substitute for having the distance from $X_0$ in our function.  

If we want to phrase our function as $f(x) = g(n(x))$ and satisfy that $f$ increases to $0$ as $n \to \infty$ then the $p$-Dini condition becomes $\sum_n g(n)^p < \infty$.  Our choice of function which satisfies that condition will be $f(x) = \frac{-t\log n(x)}{n(x)}$, with $t$ to be determined later.  This function also penalizes us heavily enough to beat the relative entropy which a relative equilibrium state could gain by straying from $X_0$.

Let $\mu$ be a measure on $X$ (a candidate relative equilibrium state) which is not supported on $X_0$.  We will show $h(\mu \mid \mu \circ \piinv) + \int f \, d\mu < 0$ to prove the relative equilibrium states live on $X_0$.  The reason this is sufficient is that for any measure $\mu'$ which is supported on $X_{0}$, the fact that $X_0$ is finite-to-one over $Y$ gives $h(\mu') = h(\mu' \circ \piinv)$.  Since $f \vert_{X_{0}} = 0$, we have $h(\mu') - h(\mu' \circ \piinv) + \int f \, d\mu' = 0$.  This line of justification is similar to the one used by Walters to build continuous compensation functions in \ref{Wcompfn}[Lemma 3.2].

First, we move to the clothespinned space $\bar{X}$ and let $\bar{\mu}$ be the measure defined by Lemma \ref{clothespinmeasure}.  Our next step is to induce on the set $C = \{(x,s) \in \bar{X} \mid s_0 = 1\}$, in other words the clothespinnings with a pin at $0$.  It is clear that $\bar{\mu}(C) > 0$.   We will call the induced system $(X_{C}, T_{C}, \Balg_{C}, \mu_{C})$.  Note that the action of $T_{C}$ is to shift $(x,s) \in C$ so that the next clothespin in $s$ is at the origin. 

If we define $ \bar{f} = f \circ \pi_{X}$ then $\int f \, d\mu = \int \bar{f} \, d\bar{\mu}$.  Let $G(n_{1}, n_{2}, a, b, c)$ be the set of $(x,s) \in C$ such that $n_{1}$ and $n_{2}$ are the first two pins to the right of $0$ with $x_0 = a$, $x_{n_1} = b$ and $x_{n_2} = c$.   Let $G(n_1, n_2) = \bigcup_{a, b, c} G(n_1, n_2, a, b, c)$ and $p(n_{1}, n_{2})=\mu_{C}(G(n_1 , n_2 ))$.   As these partitions indicate, we will be bounding the sum of $\bar{f}$ up to the second return time and to this end we define $f^{2}_{C}(x,s) = \sum_{i=0}^{n_2 - 1} \bar{f}(\bar{T}^{i}(x,s))$.  

If $(x,s) \in G(n_1 , n_2 )$ then we know $x_{-n_2} \hdots x_{n_2} \not\in \U$ because the word between $0$ and $n_2$ is nonminimal.  This allows us to say that $n(x) \leq n_2$, and thus $\bar{f}(x,s) \leq \frac{-t\log(n_2 )}{n_2}$.   In fact, for all $0 \leq i \leq n_2$ we have $n(T^{i}(x)) \leq n_2$ so we bound  $f^{2}_{C}$ as
\begin{align*}
f^{2}_{C}(x,s) &\leq -t \sum_{i=0}^{n_2} \frac{\log{n_2}}{n_2} \\
  &= -t \log{n_2}
\end{align*}
So the bound obtained for the integral is
\[
\int f^{2}_{C} \, d\mu_{C} \leq \sum_{n_1, n_2} -p(n_{1}, n_{2}) B(n_{1}, n_{2}) \\
\]
where $B(n_{1}, n_{2}) = -t \log{n_2}$.

To estimate the relative entropy term, let us first recall that because $\pi_{X}$ is finite-to-one, $h(\mu \mid \mu \circ \piinv) = h(\bar{\mu} \mid \bar{\mu} \circ (\pi \circ \pi_{X})^{-1})$.  So it makes sense to work on bounding the relative entropy in the clothespinned space.  Let $\Q$ be $\sigma$-algebra on $\bar{X}$ given by knowing $\pi \circ \pi_{X}(x,s)_{-\infty}^{\infty}$.  Let $\Q_{C}$ be the elements of $\Q$ intersected with $C$.  Then Abramov's formula tells us that
\begin{align*}
h(\bar{\mu} \mid \bar{\mu} \circ (\pi \circ \pi_{X})^{-1}) &= h( \bar{\mu} \mid  \Q ) \\
&= \frac{1}{2} \bar{\mu}(C) h_{T_{C}^{2}}( \mu_{C} \mid \Q_{C})
\end{align*}
If we know $\pi \circ \pi_{X}(x,s)$ then learning $(x,s)_{0}^{n_{2}}$ is the same as learning $n_{1}$, $n_{2}$,  $x_{0}$, $x_{n_1}$ and $x_{n_2}$.  So if we let $\Part$ be the $G(n_1, n_2, a, b, c)$ partition we can say
\[
 h_{T_{C}^{2}}( \mu_{C} \mid \Q_{C}) = h(T_{C}^{2}, \Part) \leq \sum_{n_1 , n_2} \sum_{a, b, c} -p(n_1, n_2, a, b, c) \log p(n_1, n_2, a, b, c)
\]

We want this estimate to be finite to reasonably combine it with our integral bound.  We will use the usual Jensen's inequality for $\log$ to accomplish this.  Recall that $\sum_{a, b, c} p(n_1, n_2, a, b, c) = p(n_1, n_2)$.  Let $p(n) = \sum_{n_2 = n} p(n_1, n_2)$ and $d = \vert \A(X) \vert^3$.  First we apply the inequality to the inner sum.
\begin{align*}
\sum_{a, b, c} p(n_1, n_2, a, b, c)  \log&\left( \frac{1}{ p(n_1, n_2, a, b, c)} \right) \leq p(n_1 , n_2) \log\left( \frac{d}{p(n_1 , n_2)} \right) \\
&= p(n_1 , n_2) \log(d) + p(n_1 , n_2) \log\left( \frac{1}{p(n_1 , n_2)} \right)
\end{align*}
Now we break the sum over $n_1$ and $n_2$ into a sum first with a fixed value for $n_2$, then over possible choices of $n_2$.  We apply Jensen's inequality once more to the second term above.
\[
\sum_{n_2 = n} p(n_1 , n_2) \log\left( \frac{1}{p(n_1 , n_2)} \right) \leq p(n) \log\left( \frac{n-1}{p(n)} \right)
\]
Combining these inequalities allows us to say
\begin{align*}
 \sum_{n_1 , n_2} \sum_{a, b, c} -p(n_1, n_2, a, b, c) & \log p(n_1, n_2, a, b, c) \leq \\
&= \sum_{n} \left( -p(n) \log p(n) + p(n) \log(n-1) + p(n) \log d \right)
\end{align*}

We will deal with the finiteness of the three terms in this sum separately.  The sum of the third terms is clearly finite.  From the Kac formula (see for example \cite[Theorem 4.6]{Pbook}) , we know the first return times have finite expectation, and thus so do the second return times.  This allows us to say $\sum_{n} n p(n) < \infty$, which implies the second set of terms $\sum_{n} p(n) \log(n-1) < \infty$.  The first term can be dealt with by first defining $A = \left\{ n \mid p(n) \leq \frac{1}{n^2} \right\}$.  Then we have
\[
\sum_{n \in A} -p(n) \log p(n) < \sum_{n \in A} \frac{2 \log n}{n^2} < \infty
\]
When $n \in A^{\mathsf{C}}$, 
\begin{align*}
\log p(n) &> -2 \log n \\
 -p(n) \log p(n) &< 2 p(n) \log n \\
\sum_{n \in A^{\mathsf{C}}} -p(n) \log p(n) < \sum_{n \in A^{\mathsf{C}}} 2 p(n) \log n
\end{align*}
Again from the Kac formula we can say this last sum is finite, and thus the estimate on the relative entropy is finite.

Combining the two bounds we arrive at an inequality of the form
\begin{align*}
\frac{2}{\bar{\mu}(C)} & \left\{ h(\bar{\mu} \mid \bar{\mu} \circ (\pi \circ \pi_{X})^{-1}) + \int \bar{f} \, d\bar{\mu} \right\} \leq \\ 
& \sum_{n_1, n_2} \sum_{A} B(n_1, n_2) p(n_1, n_2, A) - p(n_1, n_2, A) \log p(n_1, n_2, A)
\end{align*}

If we define $a(n_1, n_2) = e ^{B(n_1, n_2)} = \left( \frac{1}{n_2} \right)^{t}$ then for a suitable value of $t$ these terms are summable, and in fact their sum is as small as we like.  Assume that $t$ is chosen to make these summable.  For simplicity in the following argument, we will temporarily reindex $(n_1, n_2, A)$ to $n$. Define $C = \sum_{n} a_{n}$. We are seeking to show $\sum_{n} p_{n} \log \frac{a_n}{p_n}$ is maximized when $\frac{a_n}{p_n} = C$ for all $n$.  The tangent line to $\log$ at $C$ is given by $L_{C}(x) = \log(C) - \frac{1}{C}(x - C)$.  Note that because $\log(x)$ is concave, the tangent at $C$ lies entirely above $\log(x)$.  So, taking $x = \frac{a_{n}}{p_{n}}$ gives us the inequality $\frac{a_n}{p_{n}C} - 1 + \log(C) \geq \log{ \frac{a_n}{p_n}}$.  Thus
\begin{align*}
\sum_{n} p_{n} \log{\frac{a_n}{p_n}} &\leq \sum_{n} p_{n} \left( \frac{a_n}{p_{n}C} - 1 + \log(C) \right) \\
& = \log(C)
\end{align*}
When $\frac{a_n}{p_n} = C$ this bound is obtained.  Indexing back to $(n_1, n_2, A)$ we obtain the following inequality.
\[
\sum_{n_1, n_2} \sum_{A} B(n_1, n_2) p(n_1, n_2, A) - p(n_1, n_2, A) \log p(n_1, n_2, A) \leq \log\left( \sum_{n_1, n_2} \left( \frac{1}{n_2} \right)^{t} \right)
\]
Through adjusting $t$ we can ensure that this bound is negative, and thus the theorem is proved. \qed

\section{Phase transition and open questions}
The following interpretation of Theorems \ref{thm:1dini} and \ref{thm:pdini} seems relevant.  When the only functions we are observing are $1$-Dini, we see that all of their relative equilibrium states are fully supported.  As soon as we relax our view to functions which are $(1+\epsilon)$-Dini, we begin to see some which have non-fully supported relative equilibrium states, including the compensation function constructed in Theorem \ref{thm:pdini}.  This kind of sharp boundary which characterizes a fundamental shift in the behaviour of equilibrium states can be reasonably called a phase transition.  

This work fits into a body of results on infinite-to-one factor maps of shifts of finite type which has seen considerable growth in recent years (for example \cite{PQS}, \cite{AQ}, \cite{SS} and \cite{Yoo}).  These works have developed a greatly improved understanding of what such factor systems look like, but that picture relies on the strong combinatorial properties present when working with a shift of finite type.  One feature which disappears when we consider factors of more general subshifts is the MPW construction.  The existence of Walters-type compensation functions is closely tied to this construction, which motivates the following conjecture.

\begin{conj}
There exist subshifts $X$ and $Y$, and a factor map $\pi:X \to Y$, such that $(X, Y, \pi)$ has no continuous compensation function.
\end{conj}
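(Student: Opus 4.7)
The plan is to prove the conjecture by exhibiting a factor triple $(X,Y,\pi)$ that lies outside the sofic world. Since Walters' existence theorem covers the sofic case via the MPW construction, any counterexample must involve a non-sofic $Y$, so my candidates would be coded systems built to have fiber complexity that is essentially discontinuous — for instance a carefully designed $S$-gap shift or a Dyck-like shift, together with a general-subshift cover $X$ whose fibers over $Y$-periodic words grow at wildly varying exponential rates for words that are nearby in the shift metric. The target is to produce two sequences of $Y$-periodic orbits of the same period $n$ whose coding words agree on very long initial segments but whose preimage counts in $X$ differ by a factor of order $e^{cn}$ for some fixed $c>0$.

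Given such a construction, I would derive a contradiction from the periodic-orbit formulation of topological pressure. Suppose $f\in C(X)$ is a compensation function. By taking continuous $\phi$ on $Y$ concentrated near a prescribed $Y$-periodic orbit and applying $\Pres_X(f+\phi\circ\pi)=\Pres_Y(\phi)$, one aims to show that for each $Y$-periodic $y$ of period $n$,
\[
\sum_{x\in\piinv(y)\cap\mathrm{Per}_n(T)} e^{S_n f(x)}
\]
is bounded above and below uniformly in $y$ and $n$, up to subexponential error, where $S_n f=\sum_{i=0}^{n-1} f\circ T^i$. Continuity of $f$ forces $S_n f$ to be essentially constant on each fiber up to $n\cdot\mathrm{var}_n(f)$, so the sum is of order $|\piinv(y)\cap\mathrm{Per}_n|\cdot e^{S_n f(x_0)}$ for any chosen lift $x_0$. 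Comparing this relation for two periodic $y,\tilde y$ whose coding words agree on a long window, one can choose lifts $x_0,\tilde x_0$ that are close in $X$, so continuity of $f$ bounds $|S_n f(x_0)-S_n f(\tilde x_0)|$ in terms of $n\cdot\mathrm{var}_n(f)$, while the preimage-count disparity demands this quantity be of order $cn$. Taking the window long enough forces the contradiction. Coboundary escape is not an issue here: continuous coboundaries contribute nothing to $S_n f$ on an $n$-periodic orbit, so the dichotomy is detected by $f$ itself.

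Concretely, I would try to arrange that $Y$ admits periodic blocks of the form $u0^k$ and $u0^{k-1}1$ whose preimages in $X$ have cardinality $e^{ck}$ and $O(1)$ respectively, or some analogous dichotomy triggered by a combinatorial flag. The main obstacle is producing an explicit non-sofic $Y$ together with a concrete $X$ where the preimage-count disparity persists along infinitely many periods while enough invariant measures and relative equilibrium states exist to support the pressure calculation — non-sofic subshifts lack the combinatorial scaffolding (specification, MPW, the Yoo-type lemmas underlying Theorems \ref{thm:1dini} and \ref{thm:pdini}) that the techniques of this paper rely on. A secondary challenge is validating the periodic-point representation of pressure in the non-sofic setting, since the classical proofs typically assume specification or an entropy-determining periodic point set; if needed, this can be replaced by a direct variational argument using measures supported on $T^n$-invariant subsets near the chosen orbits. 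Assembling an explicit example that simultaneously supports the pressure formalism and carries the required sharp fiber-count dichotomy is the delicate step I foresee as hardest.
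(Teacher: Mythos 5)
This statement is an open conjecture in the paper. The author gives only heuristic motivation (the MPW construction, which underlies the Walters-type compensation function of Theorem \ref{thm:pdini}, is unavailable once $X$ is not sofic), and no proof is offered or claimed. So your task is not to match the paper's argument but to propose a workable one, and your proposal, as you yourself acknowledge, stops short of a construction. It should therefore be judged as a research sketch, not a proof.

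Within that sketch there is one concrete error that collapses the central implication. You assert that continuity of $f$ forces $S_n f$ to be essentially constant across a fiber $\piinv(y)\cap\mathrm{Per}_n(T)$ up to $n\cdot\mathrm{var}_n(f)$. This is false. Two preimages $x, x'$ of the same $n$-periodic $y$ under an infinite-to-one $1$-block code are generally not close in the shift metric at all; they can disagree at coordinate $0$ and at every coordinate. All one can say is $|S_n f(x)-S_n f(x')|\leq 2n\|f\|_\infty$. Indeed, for a Walters-type compensation function this spread is the whole point: $f$ is near zero on a finite-to-one subfactor $X_0$ and sharply negative off it, so $S_n f$ varies on a fiber by an amount of order $n$. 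Consequently the replacement of $\sum_{x\in\piinv(y)\cap\mathrm{Per}_n}e^{S_n f(x)}$ by $|\piinv(y)\cap\mathrm{Per}_n|\cdot e^{S_n f(x_0)}$ is unjustified, and the ``fiber-count dichotomy'' argument you build on top of it does not go through. The quantity one actually needs to control is the relative pressure $P(\pi,f)(y)$ in the Ledrappier--Walters sense, which already incorporates the weighting by $e^{S_n f}$ over separated subsets of the fiber; the condition $W(f)=0$ gives you integral bounds on this, not pointwise bounds on fiber cardinalities.

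Beyond that error, the remaining obstacles you list are real and substantial: the periodic-point formula for pressure is not known to hold for the general non-sofic $Y$ you would need, ``concentrating'' a continuous $\phi$ near a periodic orbit necessarily leaks weight onto nearby orbits, and no explicit $(X,Y,\pi)$ is produced. The intuition that one should look for wildly discontinuous fiber complexity in a non-sofic system is reasonable and matches the paper's own motivation, but as written the argument has a false step at its core and no construction to apply it to, so it does not settle the conjecture.
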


\bibliography{antonioliCFn}
\bibliographystyle{abbrv}

\end{document}